\newtheoremstyle{nonum}{}{}{\itshape}{}{\bfseries}{.}{ }{\thmnote{#3}}
\newtheorem{thm}{Theorem}
\newtheorem*{thm*}{Theorem}
\newtheorem*{cor*}{Corollary}
\newtheorem{lem}[thm]{Lemma}
\newtheorem*{lem*}{Lemma}
\newtheorem{prop}[thm]{Proposition}
\newtheorem{rem}[thm]{Remark}
\newtheorem*{rem*}{Remark}
\newtheorem{conj}[thm]{Conjecture}
\newtheorem*{conj*}{Conjecture}
\newtheorem{exm}[thm]{Example}
\newtheorem*{definition*}{Definition}
\newtheorem*{fact*}{Fact}
\newtheorem*{rems*}{Remarks}
\newtheorem{rems}[thm]{Remarks}
\theoremstyle{nonum}
\newcommand{\RR}{\mathbb R}
\newcommand{\N}{\mathbb N}
\def\sp{{\rm sp }}
\newcommand{\iprod}[2]{\langle #1,#2 \rangle} 
\newcommand{\vol}{{\rm{Vol}}}
\def\conv{{\rm conv}}
\def\moverlay{\mathpalette\mov@rlay}
\def\mov@rlay#1#2{\leavevmode\vtop{%
		\baselineskip\z@skip \lineskiplimit-\maxdimen
		\ialign{\hfil$\m@th#1##$\hfil\cr#2\crcr}}}
\newcommand{\charfusion}[3][\mathord]{
	#1{\ifx#1\mathop\vphantom{#2}\fiי
		\mathpalette\mov@rlay{#2\cr#3}
	}
	\ifx#1\mathop\expandafter\displaylimits\fi}
\DeclarePairedDelimiter{\parens}()
\newcommand{\setdef}[2]{\left\{ #1 \ \middle| \ #2 \right\}}
\newcommand{\diff}{\mathop{}\!d}
\begin{document}
	\title{A refinement of the \v{S}id\'ak-Khatri inequality and a strong Gaussian correlation conjecture}
 \author{Rotem Assouline, Arnon Chor, and Shay Sadovsky}
	\maketitle

\begin{abstract}
     We prove two special cases of a strengthened Gaussian correlation conjecture, due to Tehranchi, and show that if the conjecture holds asymptotically, it holds for any dimension. Additionally, we use these special cases to prove a refined version of the \v{S}id\'ak-Khatri inequality.
\end{abstract}
\section{Introduction}
Over 50 years ago, \v{S}id\'ak \cite{sidak1967rectangular,sidak1968multivariate} and independently, Khatri \cite{khatri1967certain}, proved the following inequality for $X_1,\dots, X_n$ symmetric Gaussian random variables:
\[
    \Pr(|X_i|\le 1 \ \forall i \in [n]) \ge \prod_{i=1}^n \Pr(|X_i|\le 1) .
\]
Our first theorem is a refinement of this result.
\begin{thm}[Refined \v{S}id\'ak-Khatri inequality]\label{thm:refined-sidak-prob}
    Let $X_1,\dots, X_n$ be zero-mean jointly Gaussian real random variables, and let $a_1 \in (0,\infty]$. Then,
    \begin{align}\label{eq:simple-sidak}
        \begin{split}
            &\Pr(|X_1|\le 1+a_1)\Pr(|X_i|\le 1 \ \forall i \in [n])\\
            &\quad \ge \Pr(|X_1|\le 1) \Pr(|X_1|\le 1+a_1,|X_2|\le 1,\dots, |X_n|\le 1  ).
        \end{split}
    \end{align}
    In particular, we find that
    \begin{equation}\label{eq:refined-sidak}
            \dfrac{\Pr(|X_i|\le 1 \ \forall i \in [n])}{ \prod_{i=1}^n \Pr(|X_i|\le 1) } \ge \sup_{a_1,\dots,a_n\in (0,\infty) } \dfrac{\Pr(|X_i|\le 1+a_i \ \forall i \in [n])}{ \prod_{i=1}^n \Pr(|X_i|\le 1+a_i) }. 
    \end{equation}
\end{thm}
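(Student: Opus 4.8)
The plan is to prove the two‑threshold inequality \eqref{eq:simple-sidak} first, and then to deduce \eqref{eq:refined-sidak} from it by rescaling and a coordinate‑by‑coordinate iteration.

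\textbf{Step 1: the two‑threshold inequality \eqref{eq:simple-sidak}.} Write $\gamma$ for the law of $(X_1,\dots,X_n)$ on $\R^n$ and observe that \eqref{eq:simple-sidak} is exactly the instance of the (strong) Gaussian correlation statement in which one body is a slab: with $L=\{|x_1|\le 1\}$ and $K=\{|x_i|\le 1\ \forall i\ge 2\}$ it rearranges to $\gamma(K\cap L)\,\gamma((1+a_1)L)\ge\gamma(L)\,\gamma(K\cap(1+a_1)L)$. I would prove it by conditioning on $X_1$. If $\operatorname{Var}(X_1)=0$ then $X_1\equiv 0$ and \eqref{eq:simple-sidak} is trivial, so assume $\operatorname{Var}(X_1)>0$ and set $h(s):=\Pr(|X_i|\le 1\ \forall i\ge 2\mid X_1=s)$. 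Conditionally on $X_1=s$, the vector $(X_2,\dots,X_n)$ is Gaussian with mean $sv$ for a fixed vector $v$ and a fixed covariance $\Sigma$, so $h(s)=\int_{[-1,1]^{n-1}}\exp\!\big(-\tfrac12\|\Sigma^{-1/2}(y-sv)\|^2\big)\,dy$ up to a constant; the integrand is jointly log‑concave in $(s,y)$ (an indicator of a convex set times the exponential of a convex quadratic), so $h$ is log‑concave by Prékopa--Leindler. Since $[-1,1]^{n-1}$ is centrally symmetric and the Gaussian density is even, the substitution $y\mapsto -y$ shows $h(-s)=h(s)$; hence $\log h$ is even and concave, so $h$ is nonincreasing in $|s|$. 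Letting $p$ denote the (symmetric) density of $X_1$, inequality \eqref{eq:simple-sidak} is equivalent to $\EE[h(X_1)\mid |X_1|\le 1]\ge \EE[h(X_1)\mid |X_1|\le 1+a_1]$, which is the one‑variable fact that $t\mapsto \big(\int_{-t}^{t}hp\big)\big/\big(\int_{-t}^{t}p\big)$ is nonincreasing: the average over $[-t_2,t_2]$ is a convex combination of the averages over $[-t_1,t_1]$ and over the annulus $\{t_1<|s|\le t_2\}$, and on the annulus $h\le h(t_1)$ while on $[-t_1,t_1]$ one has $h\ge h(t_1)$. The case $a_1=\infty$ is the limit $t_2\to\infty$ (and recovers the classical \v{S}id\'ak inequality for a slab against a box).

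\textbf{Step 2: from \eqref{eq:simple-sidak} to \eqref{eq:refined-sidak}.} Applying \eqref{eq:simple-sidak} to the Gaussians $X_i/c_i$ upgrades it to: for all $c_i>0$, all $c_j'\ge c_j$, and all coordinates $j$ (by symmetry among the coordinates),
\[
\Pr(|X_j|\le c_j')\,\Pr(|X_i|\le c_i\ \forall i)\ \ge\ \Pr(|X_j|\le c_j)\,\Pr\big(|X_j|\le c_j',\ |X_i|\le c_i\ \forall i\ne j\big).
\]
Now fix $a_1,\dots,a_n\in(0,\infty)$ and, for $j=0,1,\dots,n$, let $t^{(j)}:=(1,\dots,1,1+a_{j+1},\dots,1+a_n)$, so $t^{(0)}=(1+a_i)_i$ and $t^{(n)}=(1,\dots,1)$. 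Applying the upgraded inequality in coordinate $j$ with lower threshold $1$, higher threshold $1+a_j$, and thresholds $t^{(j)}_i$ in the remaining coordinates, and then dividing by the relevant product of one‑dimensional probabilities, yields
\[
\frac{\Pr(|X_i|\le t^{(j)}_i\ \forall i)}{\prod_{i}\Pr(|X_i|\le t^{(j)}_i)}\ \ge\ \frac{\Pr(|X_i|\le t^{(j-1)}_i\ \forall i)}{\prod_{i}\Pr(|X_i|\le t^{(j-1)}_i)}\qquad(j=1,\dots,n).
\]
Chaining these inequalities gives $\Pr(|X_i|\le 1\ \forall i)\big/\prod_i\Pr(|X_i|\le 1)\ \ge\ \Pr(|X_i|\le 1+a_i\ \forall i)\big/\prod_i\Pr(|X_i|\le 1+a_i)$, and taking the supremum over $a_1,\dots,a_n\in(0,\infty)$ is exactly \eqref{eq:refined-sidak}. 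All denominators are positive because every threshold is $\ge 1>0$.

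\textbf{Main obstacle.} The crux is the one‑dimensional slicing step in Step 1 --- in particular, justifying that the slice function $h$ is log‑concave and even when the conditional Gaussian has a nonzero mean $sv$ and a possibly singular covariance; the singular case can be handled by replacing $\Sigma$ with $\Sigma+\varepsilon\Id$ and letting $\varepsilon\to 0$, or by working on the affine support of the conditional law. Once unimodality of $h$ is established, the remaining ingredients --- the monotone‑truncated‑average lemma and the rescaling/telescoping bookkeeping for \eqref{eq:refined-sidak} --- are routine.
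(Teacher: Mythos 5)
Your proof is correct and follows essentially the same route as the paper: both arguments slice along the slab direction, use log-concavity together with evenness of the resulting one-dimensional slice function (via Pr\'ekopa) to get unimodality, reduce to a one-dimensional monotonicity statement, and then iterate coordinate-by-coordinate after rescaling to obtain \eqref{eq:refined-sidak}. The only difference is packaging: the paper first proves the stronger geometric inequality $\gamma_d(\conv\{K\cup T\})\gamma_d(K\cap T)\ge\gamma_d(K)\gamma_d(T)$ for an arbitrary origin-symmetric convex body $K$ and a slab $T$ (Theorem \ref{thm:slab}) via a layer-cake lemma and then translates to probabilities, whereas you condition on $X_1$ directly and prove only the probabilistic statement \eqref{eq:simple-sidak} through a monotone-truncated-average lemma.
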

Clearly, when $a_i = \infty$ for all $i \in [n]$, the inequality becomes the \v{S}id\'ak-Kahtri inequality. In addition, renormalizing, we find that 
\[
    \dfrac{\Pr(|X_i|\le c_i \ \forall i \in [n])}{ \prod_{i=1}^n \Pr(|X_i|\le c_i) }
\]
is increasing each of the $c_i$'s (see Remark \ref{rem:confidence-interval}).

The Gaussian Correlation inequality, which is a generalization of the \v{S}id\'ak-Khatri inequality, was proved by Royen in \cite{royen2014simple} and states the following:
\begin{equation}\label{eq:prob-GCI}
    \Pr(|X_i|\le 1 \ \forall i \in [n])\ge \Pr(|X_i|\le 1 \ \forall i \in [k])\Pr(|X_i|\le 1 \ \forall i \in [n] \setminus [k]).
\end{equation}

In attempt to generalize Theorem \ref{thm:refined-sidak-prob}, we present the following conjecture, which is equivalent to a conjecture of Teheranchi \cite{Tehranchi} (stated below).
\begin{conj}\label{conj:strong-GCI-prob}
    Let $X_1,\dots, X_n$ be zero-mean jointly Gaussian real random variables, and let $s_1,\dots, s_n, t_1,\dots, t_n \in [0,\infty)\cup \{\infty\}$. Then,
    \begin{equation} \label{eq:strong-GCI-prob}
    \begin{aligned}
        \Pr(|X_i|\le s_i+t_i \  \forall i\in [n])&\Pr(|X_i|\le \min\{s_i,t_i\} \  \forall i\in [n])\\
        \ge \Pr(|X_i|\le s_i \  \forall i\in [n])&\Pr(|X_i|\le t_i \ \forall i\in [n]).
    \end{aligned}
    \end{equation}
\end{conj}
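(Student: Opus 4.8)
The plan is to recast \eqref{eq:strong-GCI-prob} as a log-supermodularity statement for Gaussian measures of boxes and to attack that statement by a disintegration-and-induction argument, with a Royen-type covariance interpolation as a fallback. For a zero-mean Gaussian vector $X=(X_1,\dots,X_n)$ and $c\in[0,\infty]^n$, write $R_c=\prod_{i=1}^n[-c_i,c_i]$ and $F(c)=\Pr(|X_i|\le c_i\ \forall i\in[n])$. Coordinate boxes are closed under Minkowski sum and intersection, with $R_s+R_t=R_{s+t}$ and $R_s\cap R_t=R_{s\wedge t}$ ($\wedge$ coordinatewise), so \eqref{eq:strong-GCI-prob} is precisely the box case of Tehranchi's geometric inequality $\gamma(K+L)\,\gamma(K\cap L)\ge\gamma(K)\,\gamma(L)$ for symmetric convex $K,L$; the two formulations are equivalent, since every symmetric convex body is a linear preimage of a box (for a possibly degenerate Gaussian) and boxes are convex. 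Now $s\vee t\le s+t$ coordinatewise and $F$ is nondecreasing in each variable, so $F(s+t)\ge F(s\vee t)$; hence it suffices to prove the cleaner inequality
\[
 F(s\vee t)\,F(s\wedge t)\ \ge\ F(s)\,F(t)\qquad (s,t\in[0,\infty]^n),
\]
i.e.\ that $\log F$ is supermodular on the lattice $([0,\infty]^n,\vee,\wedge)$. This is a common strengthening of \eqref{eq:strong-GCI-prob} and of the Gaussian correlation inequality \eqref{eq:prob-GCI} (the latter being the instance in which $s$ is infinite on a coordinate block and $t$ on its complement), so it is the natural target; on the open positive orthant it is equivalent to $F\,\partial_i\partial_j F\ge(\partial_i F)(\partial_j F)$ for all $i\ne j$.

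\textbf{Disintegration and induction.} Differentiating $F(c)=\int_{R_c}f_X$ and using the evenness of the Gaussian density $f_X$ to fold the faces $\{x_i=\pm c_i\}$ onto one another gives $\partial_i F(c)=2\int_{R'_c}f_X|_{x_i=c_i}$, with $R'_c$ the box in the remaining coordinates — an unconditional density times a box-probability in dimension $n-1$. Differentiating again in $c_j$ ($j\ne i$) no longer folds, because conditioning on $x_i=c_i$ shifts the mean in the $j$-th coordinate; one gets $\partial_i\partial_j F(c)=2(A_++A_-)$ with $A_\pm=\int_{R''_c}f_X|_{x_i=c_i,\,x_j=\pm c_j}$ and $R''_c$ the box in the other $n-2$ coordinates. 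After cancelling the common Gaussian-density prefactors, the target bound $F\,\partial_i\partial_j F\ge(\partial_i F)(\partial_j F)$ reduces to a correlation inequality among conditional (mean-shifted) box-probabilities in dimensions $n-1$ and $n-2$, into which one would feed an induction on $n$. The obstruction is that the product structure of \eqref{eq:strong-GCI-prob} genuinely couples two distinct box parameters, so integrating out a single coordinate by Fubini does not telescope; one is pushed toward a two-parameter family and a differential inequality in an interpolation parameter.

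\textbf{Fallback, and the main obstacle.} Failing the induction, the natural substitute is a Royen-style argument as in the proof of \eqref{eq:prob-GCI}: interpolate the covariance of $X$ between its true value and a partially block-diagonal one, differentiate $\log F$ along the path, and additionally track the dependence on the half-widths $c_i$; the second-order terms that arise are of a local log-concavity type, reminiscent of the Cordero-Erausquin--Fradelizi--Maurey $B$-theorem for Gaussian measures. One could also try to exploit the reduction, established below, of the conjecture to an asymptotic (large-dimensional) statement, where extra concentration is available. In all of these routes the crux is the same: the analogue for the one-sided distribution function $c\mapsto\Pr(X\le c)$ is false in general — it would force the precision matrix of $X$ to have nonpositive off-diagonal entries — so any proof must genuinely use the symmetry $X\sim-X$, which is exactly what couples the faces $\{x_j=\pm c_j\}$ above and brings \eqref{eq:prob-GCI} into play. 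Isolating the positivity that survives this folding — a comparison of Gaussian box measures under simultaneous shift and dilation of several coordinates, strictly stronger than \eqref{eq:prob-GCI} — is the main obstacle, and is why one currently reaches only special cases (among them those yielding Theorem \ref{thm:refined-sidak-prob}) rather than the full conjecture.
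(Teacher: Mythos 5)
The statement you were asked to prove is a \emph{conjecture}: the paper does not prove it, and offers only (i) its equivalence to Tehranchi's geometric Conjecture~\ref{conj:strong-GCI} (Proposition~\ref{prop:equivalent-formulations}, via the standard pullback $X_i=\iprod{Y}{u_i}$ and the correspondence $K+T\leftrightarrow s+t$, $K\cap T\leftrightarrow s\wedge t$ of support functions), and (ii) special cases: the slab case (Theorems~\ref{thm:refined-sidak-prob} and~\ref{thm:slab}), the unconditional case (Theorem~\ref{thm:uncond-strong-GCI}, via Karlin--Rinott), and a tensorization reduction to an asymptotic statement (Theorem~\ref{thm:asymptotic-GCI}). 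Your proposal is likewise not a proof --- you say so explicitly in the last paragraph --- so there is nothing to certify here; what I can do is assess the strategy. Your equivalence-to-boxes observation is correct and matches Proposition~\ref{prop:equivalent-formulations}, and your identification of the role of the symmetry $X\sim -X$ (the one-sided analogue fails) is a genuine and correct insight.

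The one step I would warn you about is the opening reduction. Replacing $F(s+t)$ by $F(s\vee t)$ is logically sound (it only strengthens the claim, since $s\vee t\preccurlyeq s+t$ and $F$ is monotone), but it commits you to proving log-supermodularity of $c\mapsto F(c)$ on $([0,\infty]^n,\vee,\wedge)$, which is a strictly stronger open statement sitting between the conjecture and a provably \emph{false} one: Example~\ref{ex:not-for-conv} in the paper shows that the geometric inequality fails when $K+T$ is shrunk to $\conv\{K\cup T\}$, and $R_{s\vee t}$ lies between $\conv\{R_s\cup R_t\}$ and $R_{s+t}$. So the ``cleaner'' target may simply be false, and even if true it discards exactly the slack ($s+t$ versus $s\vee t$) that the paper's positive results exploit --- the proof of Theorem~\ref{thm:uncond-strong-GCI} uses $x\vee y\preccurlyeq x+y$ at the level of \emph{points} of the positive orthant, not of box parameters, and the proof of Theorem~\ref{thm:slab} uses $\conv\{K\cup T\}$ only because for a slab $T$ the relevant one-dimensional fibres make the convex-hull version already true. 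Your second-derivative criterion $F\,\partial_i\partial_j F\ge(\partial_iF)(\partial_jF)$ and the folding computation are correct as far as they go, but, as you note, the induction does not close; I would not present this as a proof of Conjecture~\ref{conj:strong-GCI-prob}, only as a (partly risky) reduction plus a program.
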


Our proof relies on the geometric interpretation of the inequality with respect to the usual Gaussian probability measure in $\RR^d$, denoted henceforth as $\gamma_d$. The classical \v{S}id\'ak-Khatri inequality can be interperted as the inequality $\gamma_d(K \cap T) \ge \gamma_d(K)\gamma_d(T)$ when $K$ is an origin-symmetric convex set and $T$ is a symmetric slab. In this geometric setting, this is due to Giannopoulos \cite{giannopoulos1997vector-ballancing} and Szarek and Werner \cite{szarek-werner1999nonsymmetric}.

Theorem \ref{thm:refined-sidak-prob} is restated and proved in the geometric setting as follows.
\begin{thm}\label{thm:slab}
    Let $K\subseteq \RR^d$ be an origin-symmetric convex body and $T=\{x:\ |\iprod{x}{u}|\le 1\}\subset \RR^d$ a symmetric slab with $u\in \RR^d$. Then,
    \begin{equation}\label{eq:geometric-sidak}
        \gamma_d(\conv\{K\cup T\})\gamma_d(K\cap T) \ge \gamma_d(K)\gamma_d(T).
        \end{equation}
\end{thm}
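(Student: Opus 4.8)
The plan is to use the rotational symmetry of $\gamma_d$ to put the slab in standard position, then observe that $\conv\{K\cup T\}$ is secretly a wider slab, and finally disintegrate everything along the slab direction to reduce to a one-variable inequality for the Gaussian average of a log-concave function.

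First I would use rotation invariance of $\gamma_d$ to assume $u=\rho e_1$ with $\rho>0$, so $T=\{x:|x_1|\le a\}$ with $a=1/\rho$, and set $b:=\max_{x\in K}x_1>0$; origin-symmetry gives $K\subseteq\{|x_1|\le b\}$. If $b\le a$ then $K\subseteq T$, so $K\cap T=K$ and $\conv\{K\cup T\}=T$, and \eqref{eq:geometric-sidak} holds with equality; so assume $b>a$. The central point I would then establish is that $\conv\{K\cup T\}$ coincides with the slab $S:=\{x:|x_1|\le b\}$ up to a $\gamma_d$-null set. The inclusion $\conv\{K\cup T\}\subseteq S$ is immediate, since every point of $K\cup T$ has first coordinate in $[-b,b]$ (for points of $T$ because $a<b$) and $S$ is convex. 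For the reverse inclusion, fix $t$ with $|t|<b$, pick $p\in K$ with $p_1=b$ if $t\ge 0$ and $p_1=-b$ if $t<0$, and use that $T$ contains the whole central hyperplane $\{x_1=0\}$: the union of the segments from $p$ to the points of $\{x_1=0\}$ already covers the entire slice $\{x_1=t\}$, so $\{|x_1|<b\}\subseteq\conv\{K\cup T\}$. Since $S\setminus\{|x_1|<b\}$ is $\gamma_d$-null, this yields $\gamma_d(\conv\{K\cup T\})=\gamma_d(S)$ — and note this argument never needs to decide whether $\conv\{K\cup T\}$ is closed.

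Next I would slice along the first coordinate. Writing $K_t:=\{y\in\RR^{d-1}:(t,y)\in K\}$, $h(t):=\gamma_{d-1}(K_t)$, and letting $\phi$ denote the standard Gaussian density on $\RR$, Fubini gives
\[
\gamma_d(K)=\int_{-b}^{b}h\phi,\qquad \gamma_d(K\cap T)=\int_{-a}^{a}h\phi,\qquad \gamma_d(T)=\gamma_1([-a,a]),\qquad \gamma_d(S)=\gamma_1([-b,b]),
\]
so that, after dividing, \eqref{eq:geometric-sidak} becomes $A(a)\ge A(b)$, where $A(s):=\gamma_1([-s,s])^{-1}\int_{-s}^{s}h\phi$ is the $\gamma_1$-average of $h$ over $[-s,s]$. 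Now $h$ is even (because $K=-K$) and log-concave: $h(t)=\int_{\RR^{d-1}}\mathbbm{1}_K(t,y)\,\diff\gamma_{d-1}(y)$ is the integral of a function log-concave in $(t,y)$ against a log-concave measure, so this follows from the Pr\'ekopa--Leindler theorem; hence $h$ is non-increasing on $[0,\infty)$. Writing $A(s)=\bigl(\int_0^s\phi\bigr)^{-1}\int_0^s h\phi$ by evenness and differentiating,
\[
A'(s)=\frac{\phi(s)}{\bigl(\int_0^s\phi\bigr)^2}\int_0^s\bigl(h(s)-h(t)\bigr)\phi(t)\,\diff t\le 0
\]
because $h(t)\ge h(s)$ for $0\le t\le s$. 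Thus $A$ is non-increasing on $(0,\infty)$, so $A(a)\ge A(b)$ since $a<b$, which is exactly \eqref{eq:geometric-sidak}.

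The hard part — essentially the only non-routine step — will be the geometric claim: correctly seeing and verifying that $\conv\{K\cup T\}$ is, up to measure zero, the slab of half-width $\max_{x\in K}\langle x,u\rangle$. Once that is in hand, the inequality collapses to the elementary fact that the $\gamma_1$-average of an even unimodal function over $[-s,s]$ is non-increasing in $s$, with Pr\'ekopa--Leindler supplying exactly the unimodality of the slice function $h$. I would also record that the same computation recovers the classical \v{S}id\'ak-Khatri inequality $\gamma_d(K\cap T)\ge\gamma_d(K)\gamma_d(T)$, since $A(b)=\gamma_d(K)/\gamma_1([-b,b])\ge\gamma_d(K)$.
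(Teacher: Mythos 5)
Your proof is correct, and it shares the paper's basic strategy --- slice along the slab direction via Fubini and exploit log-concavity of the slice function $h(t)=\gamma_{d-1}(K_t)$ --- but the two arguments diverge in how they finish. The paper proves a general lifting lemma (Lemma \ref{lem:lift}): it performs a layer-cake decomposition $\gamma_d(K\cap T)=\int_0^\infty \gamma_1(\{h\ge z\}\cap T_1)\,\diff z$, uses log-concavity only to know that each superlevel set $\{h\ge z\}$ is a symmetric interval, and then reduces to the one-dimensional instance of \eqref{eq:geometric-sidak} for two nested symmetric intervals, which is an identity. You instead identify $\conv\{K\cup T\}$ explicitly as the slab $\{|x_1|\le b\}$ up to a null set (a point the paper passes over more quickly via $\conv\{K\cup T\}=\pi_1(\conv\{K\cup T\})\times\RR^{d_2}$; your handling of the non-closedness of the hull is actually more careful), and then prove directly that the normalized Gaussian average $A(s)=\gamma_1([-s,s])^{-1}\int_{-s}^s h\phi$ of the even, unimodal function $h$ is non-increasing in $s$. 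Both routes ultimately need only quasi-concavity plus evenness of $h$; the paper's lemma is set up to apply to general product bodies $T_1\times\RR^{d_2}$ and to transfer Conjecture \ref{conj:strong-GCI} itself down in dimension (see the remark following the lemma), whereas your monotonicity statement for $A$ is specific to the slab case but is arguably more self-contained and makes the refinement over \v{S}id\'ak--Khatri transparent, as you note at the end. The only points worth tightening are cosmetic: dispose of the degenerate case $u=0$ (where $T=\RR^d$ and \eqref{eq:geometric-sidak} is an equality), and justify differentiating $A$ by observing that $h$, being log-concave, is continuous on $(-b,b)$ (or replace the derivative computation by the one-line comparison $\int_a^b h\phi\le h(a)\int_a^b\phi\le A(a)\int_a^b\phi$).
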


Equation \eqref{eq:geometric-sidak} does not hold for all origin symmetric and convex $K,T \subseteq \RR^d$, as will be seen in Example \ref{ex:not-for-conv}. However, the following weaker inequality, which is a strengthening of Royen's Gaussian correlation inequality, was conjectured to be true by Tehranchi \cite{Tehranchi}.
\begin{conj}\label{conj:strong-GCI}
	Let $d>0$ and let $K,T\subseteq \RR^d$ be origin-symmetric convex sets, then
	\begin{equation}\label{eq:strong-GCI}
	\gamma_d(K+T)\gamma_d(K\cap T)\ge \gamma_d(K)\gamma_d(T)
	\end{equation}
	where $\gamma$ is the usual Gaussian probability measure.
\end{conj}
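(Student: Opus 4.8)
Since \eqref{eq:strong-GCI} is posed as a conjecture, what follows is a plan of attack rather than a complete proof, organized so that the routine reductions are separated from the genuine obstacle. Taking logarithms, \eqref{eq:strong-GCI} asserts that $K\mapsto\log\gamma_d(K)$ is supermodular on the family of origin-symmetric convex bodies ordered by inclusion, with join $K+T$ (and \emph{not} $\conv\{K\cup T\}$, which Example~\ref{ex:not-for-conv} excludes) and meet $K\cap T$. The first step is to pass to the probabilistic formulation: approximating any origin-symmetric convex body from outside by a finite intersection of symmetric slabs $\{x:|\iprod{x}{v_i}|\le1\}$ and setting $X_i=\iprod{Z}{v_i}$ for $Z\sim\gamma_d$, one reduces \eqref{eq:strong-GCI} to Conjecture~\ref{conj:strong-GCI-prob} (and conversely recovers it), since Minkowski addition of two slab-polytopes with a common set of facet normals simply adds the slab half-widths, which is exactly the $s_i+t_i$ of \eqref{eq:strong-GCI-prob}. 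Thus it suffices to prove $F_\Sigma(s+t)\,F_\Sigma(s\wedge t)\ge F_\Sigma(s)\,F_\Sigma(t)$, where $F_\Sigma(c):=\Pr(|X_i|\le c_i\ \forall i)$ for $X\sim N(0,\Sigma)$ and $\wedge$ is coordinatewise. Two elementary remarks narrow this. When $s\le t$ coordinatewise, $s\wedge t=s$ while $s+t\ge t$, so the claim collapses to the monotonicity $F_\Sigma(s+t)\ge F_\Sigma(t)$; only incomparable thresholds carry content. And since $s+t\ge s\vee t$ coordinatewise, \eqref{eq:strong-GCI-prob} would follow from the \emph{log-supermodularity} $F_\Sigma(s\vee t)\,F_\Sigma(s\wedge t)\ge F_\Sigma(s)\,F_\Sigma(t)$ — which holds when $n=2$, because the density of $(|X_1|,|X_2|)$ is proportional on the positive quadrant to $\cosh\!\big(\rho x_1x_2/(1-\rho^2)\big)\,e^{-(x_1^2+x_2^2)/2(1-\rho^2)}$, whose logarithm has nonnegative mixed second derivative, so $(|X_1|,|X_2|)$ is $\mathrm{MTP}_2$; this already yields the conjecture in dimension two.

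For $n\ge3$ the plan is to run Royen's covariance interpolation. Argue by induction on $n$: if $\Sigma$ is block-diagonal after relabeling then $F_\Sigma$ factors over the blocks and the inequality follows from the inductive hypothesis applied blockwise, so we may assume $\Sigma$ is not block-diagonal, fix a nontrivial partition of $[n]$, and interpolate $\Sigma(\rho)$ from the block-diagonal $\Sigma(0)$ (obtained by zeroing the cross-block covariances) to $\Sigma(1)=\Sigma$. It remains to show that the defect $D(\rho):=\log F_{\Sigma(\rho)}(s+t)+\log F_{\Sigma(\rho)}(s\wedge t)-\log F_{\Sigma(\rho)}(s)-\log F_{\Sigma(\rho)}(t)$ stays nonnegative, starting from $D(0)\ge0$. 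The natural mechanism is to differentiate in $\rho$, use the Gaussian diffusion identity $\partial_{\Sigma_{ij}}p_\Sigma=\partial_{x_i}\partial_{x_j}p_\Sigma$ to express $\partial_{\Sigma_{ij}}F_\Sigma$ through lower-dimensional Gaussian probabilities, and then appeal to the nonnegative series expansion for correlated chi-square laws that underlies Royen's proof of \eqref{eq:prob-GCI}. This is where I expect the real work. For the ordinary correlation inequality one needs only the sign of a \emph{single} quantity $\partial_{\Sigma_{ij}}\log F_\Sigma$; here $D'(\rho)$ is a combination of four such terms evaluated at four different threshold vectors, and one must show that the excess contributed by the $s+t$ term is absorbed by the deficit at the $s\wedge t$ term — an estimate strictly finer than \eqref{eq:prob-GCI} itself, which cannot in general be reduced to the pairwise statement of log-supermodularity (whose truth for $n\ge3$ is itself unclear and possibly too optimistic).

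Two partial results survive regardless. For the slab case, Theorem~\ref{thm:slab} — which is in fact stronger than the slab instance of the conjecture, since it uses $\conv\{K\cup T\}$ in place of $K+T$ — a one-dimensional disintegration works: after rotating so that $T=\{x:|x_1|\le1\}$, set $\phi(c)=\gamma_{d-1}(\{y:(c,y)\in K\})$, which is even and, by Pr\'ekopa's theorem, log-concave, hence non-increasing in $|c|$; a short computation shows $\conv\{K\cup T\}$ has full fibers over the open interval $(-a,a)$, where $[-a,a]$ is the projection of $K$ onto the $x_1$-axis (if $a\le1$ then $K\subseteq T$ and \eqref{eq:geometric-sidak} is an equality), so that, substituting the resulting disintegrations and rearranging, \eqref{eq:geometric-sidak} becomes the elementary statement that $r\mapsto\frac{\int_{-r}^r\phi(c)g(c)\,dc}{\int_{-r}^r g(c)\,dc}$ is non-increasing in $r$, $g$ being the one-dimensional Gaussian density. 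This argument has no analogue for a general $T$ — there is no preferred direction, and the fibers of $K+T$ are Minkowski convolutions of fibers rather than full subspaces — which is precisely why the general conjecture seems to require the covariance machinery above. Finally, there is a cheap dimension reduction: replacing $K,T\subset\RR^d$ by $K\times[-R,R]^k$ and $T\times[-R,R]^k$ inside $\RR^{d+k}$, applying the conjecture there, and letting $R\to\infty$ so that the cube factors cancel in the limit, shows that the conjecture in dimension $d+k$ implies it in dimension $d$; hence it suffices to establish \eqref{eq:strong-GCI} for all sufficiently large $d$, which leaves open a separate, concentration-of-measure attack on the high-dimensional regime.
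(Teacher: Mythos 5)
You have correctly identified that \eqref{eq:strong-GCI} is an open conjecture: the paper contains no proof of it, only the equivalence with Conjecture~\ref{conj:strong-GCI-prob} (Proposition~\ref{prop:equivalent-formulations}), the unconditional case (Theorem~\ref{thm:uncond-strong-GCI}), the slab case (Theorem~\ref{thm:slab}), and a tensorization reduction (Theorem~\ref{thm:asymptotic-GCI}). Your proposal is likewise not a proof, so there is no proof to certify; what can be assessed are its components. The reduction to the probabilistic statement is exactly Proposition~\ref{prop:equivalent-formulations}. Your one-dimensional disintegration for the slab case is correct and does recover Theorem~\ref{thm:slab}; it is a more hands-on version of the paper's Lemma~\ref{lem:lift}, which packages the same Fubini-plus-layer-cake computation so that the fiberwise super-level sets only need to satisfy the one-dimensional inequality. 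Your closing dimension-reduction remark (dimension $d+k$ implies dimension $d$) is true but essentially trivial --- one may take $K\times\RR^{k}$ directly, without any limit in $R$ --- and is much weaker than Theorem~\ref{thm:asymptotic-GCI}, which tensorizes $K^{N},T^{N}$ to show that an asymptotic version with constants satisfying $c_N^{1/N}\to1$ already implies the full conjecture.

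The one concrete error is the claim that the $\mathrm{MTP}_2$ property of $(|X_1|,|X_2|)$ ``already yields the conjecture in dimension two.'' The computation itself is right: the folded bivariate density is proportional to $\cosh\left(\rho x_1x_2/(1-\rho^2)\right)e^{-(x_1^2+x_2^2)/2(1-\rho^2)}$ on the positive quadrant, its logarithm has nonnegative mixed second derivative, and the FKG/Ahlswede--Daykin mechanism then gives log-supermodularity of $F_\Sigma$ for $n=2$ variables, hence Conjecture~\ref{conj:strong-GCI-prob} for $n=2$. But by Proposition~\ref{prop:equivalent-formulations} the geometric conjecture in $\RR^2$ corresponds to the probabilistic one with $n$ equal to the number of facet normals of $K+T$, which is unbounded; the $n=2$ case covers only pairs of bodies that are each intersections of the same two symmetric slabs. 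So $d=2$ of \eqref{eq:strong-GCI} remains open under your plan. Relatedly, your intermediate target $F_\Sigma(s\vee t)F_\Sigma(s\wedge t)\ge F_\Sigma(s)F_\Sigma(t)$ is strictly stronger than \eqref{eq:strong-GCI-prob} (since $s+t\succcurlyeq s\vee t$); it is not refuted by Example~\ref{ex:not-for-conv}, where the coordinates are independent and both sides coincide, but you are aiming above the conjecture, and the honest extent to which the four-function mechanism is currently known to reach is the paper's Theorem~\ref{thm:uncond-strong-GCI}, the unconditional case, proved by exactly that device applied to the product measure $\gamma_d$ restricted to the positive orthant. The Royen-interpolation step for $n\ge3$ is, as you say, where the real difficulty lies, and nothing in the paper or in your proposal resolves it.
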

In~\cite{Tehranchi}, Tehranchi showed that Conjecture \ref{conj:strong-GCI} holds up to some constant, improving on a result by Schechetman, Schlumprecht, and Zinn \cite{Schechtman1998gaussian} (these results and other related works are elaborated on in the Background section).

The proof of the equivalence of Conjectures \ref{conj:strong-GCI} and \ref{conj:strong-GCI-prob} is a classical argument, which we present in Proposition \ref{prop:equivalent-formulations} in the Background section. Using a similar argument we show that Theorem \ref{thm:slab} implies Theorem \ref{thm:refined-sidak-prob}.

We also present two partial results in the direction of the conjecture. First, we prove it for unconditional convex sets.
\begin{thm}\label{thm:uncond-strong-GCI}
		Let $K,T\subseteq \RR^d$ be unconditional convex sets, then \eqref{eq:strong-GCI} holds.
\end{thm}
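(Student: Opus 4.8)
The plan is to transfer the inequality to the positive orthant, where $\gamma_d$ becomes a product measure, and then induct on the dimension. For an unconditional convex set $K\subseteq\RR^d$ put $\tilde K:=K\cap[0,\infty)^d$. Since $(\lambda_i x_i)_i\in K$ whenever $x\in K$ and $\lambda\in[0,1]^d$ (it is a convex combination of the $2^d$ sign--flips of $x$), the set $\tilde K$ is a convex \emph{down-set}: if $x\in\tilde K$ and $0\le y\le x$ coordinatewise then $y\in\tilde K$; conversely every convex down-set of $[0,\infty)^d$ arises this way. Unconditionality gives $\gamma_d(K)=\nu^{\otimes d}(\tilde K)$, where $\nu$ is the probability measure on $[0,\infty)$ with density $t\mapsto\sqrt{2/\pi}\,e^{-t^2/2}$ (the law of $|Z|$, $Z$ a standard Gaussian). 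Clearly $\widetilde{K\cap T}=\tilde K\cap\tilde T$, and the key point is that also $\widetilde{K+T}=\tilde K+\tilde T$: the Minkowski sum of two convex down-sets is again one (a coordinatewise decrease of $p+q$, $p\in\tilde K$, $q\in\tilde T$, splits coordinatewise into a decrease of $p$ and a decrease of $q$), and if $z\ge 0$ with $z=a+b$, $a\in K$, $b\in T$, then $(|a_i|)_i\in\tilde K$, $(|b_i|)_i\in\tilde T$ and $(|a_i|)_i+(|b_i|)_i\ge z$. So \eqref{eq:strong-GCI} for unconditional sets is equivalent to
\[
  \nu^{\otimes d}(\tilde K+\tilde T)\;\nu^{\otimes d}(\tilde K\cap\tilde T)\;\ge\;\nu^{\otimes d}(\tilde K)\;\nu^{\otimes d}(\tilde T)
\]
for all convex down-sets $\tilde K,\tilde T\subseteq[0,\infty)^d$.

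I would prove this by induction on $d$. For $d=1$ a convex down-set is an interval $[0,a]$ ($a\in[0,\infty]$), and, assuming $a\le b$, the claim reduces to $\nu([0,a+b])\ge\nu([0,b])$, which holds since $t\mapsto\nu([0,t])$ is nondecreasing. For the inductive step, write points of $[0,\infty)^d$ as $(x,t)$ with $x\in[0,\infty)^{d-1}$, $t\ge0$, and for a convex down-set $A$ let $A_t:=\{x:(x,t)\in A\}$; these are convex down-sets of $[0,\infty)^{d-1}$, nonincreasing in $t$, with $\nu^{\otimes d}(A)=\int_0^\infty\nu^{\otimes(d-1)}(A_t)\,d\nu(t)$. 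Set $F(t)=\nu^{\otimes(d-1)}(\tilde K_t)$, $G(t)=\nu^{\otimes(d-1)}(\tilde T_t)$, $H(t)=\nu^{\otimes(d-1)}\!\big((\tilde K\cap\tilde T)_t\big)$ and $P(u)=\nu^{\otimes(d-1)}\!\big((\tilde K+\tilde T)_u\big)$; these are nonincreasing, and $H\le\min(F,G)$, $P\ge\max(F,G)$. Since $\tilde K_s+\tilde T_{s'}\subseteq(\tilde K+\tilde T)_{s+s'}$ and $\tilde K_s\cap\tilde T_{s'}\subseteq(\tilde K\cap\tilde T)_{\min\{s,s'\}}$, applying the inductive hypothesis to the convex down-sets $\tilde K_s,\tilde T_{s'}\subseteq[0,\infty)^{d-1}$ gives
\[
  P(s+s')\,H(\min\{s,s'\})\;\ge\;F(s)\,G(s')\qquad\text{for all }s,s'\ge 0 .
\]
Thus it suffices to prove the purely one-dimensional statement: \emph{if $F,G,H,P\colon[0,\infty)\to[0,1]$ are nonincreasing with $H\le\min(F,G)$, $P\ge\max(F,G)$, and the displayed inequality holds, then $\big(\int P\,d\nu\big)\big(\int H\,d\nu\big)\ge\big(\int F\,d\nu\big)\big(\int G\,d\nu\big)$.}

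This one-dimensional implication is the crux and the step I expect to be the main obstacle. The naive route --- bounding $F(s)G(s')$ by $P(s+s')H(\min\{s,s'\})$ inside Fubini's theorem --- is too lossy (the resulting double integral can already exceed $\big(\int P\,d\nu\big)\big(\int H\,d\nu\big)$), so the constraints $P\ge\max(F,G)$, $H\le\min(F,G)$, together with the concavity that the profiles $F,G,H,P$ inherit from $\tilde K$ and $\tilde T$, must be used in an essential way. Two lines of attack seem plausible: either a direct estimate that decomposes $(s,s')\in[0,\infty)^2$ according to the value of $s+s'$ and uses $P\ge F$, $P\ge G$ on the bulk together with the coupled bound near the diagonal; or carrying the induction through with a strengthened ``profile'' form of the orthant inequality that is manifestly stable under slicing, so that the combining step becomes a one-variable estimate against the density $\sqrt{2/\pi}\,e^{-t^2/2}$ and, in the extreme case, collapses to the one-dimensional instance $\nu([0,a+b])\nu([0,\min\{a,b\}])\ge\nu([0,a])\nu([0,b])$. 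The remaining points --- the identity $\widetilde{K+T}=\tilde K+\tilde T$, measurability and monotonicity of the profiles, and the degenerate cases ($a=\infty$, empty or lower-dimensional sets) --- are routine.
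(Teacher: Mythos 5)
Your reduction to the positive orthant is correct and matches the spirit of the paper's argument, but the proof has a genuine gap exactly where you flag it: the one-dimensional ``profile'' lemma that drives your induction is left unproven, and the two lines of attack you sketch are speculative rather than arguments. As written, the proposal does not establish the theorem. The good news is that the missing lemma is true and has a one-line proof via the right tool: since $P$ is nonincreasing and $s+s'\ge\max\{s,s'\}$, your hypothesis $P(s+s')\,H(\min\{s,s'\})\ge F(s)G(s')$ implies $P(s\lor s')\,H(s\land s')\ge F(s)G(s')$, which is precisely the hypothesis of the four-function theorem of Ahlswede--Daykin in its continuous (Karlin--Rinott) form in dimension one; that theorem then yields $\int P\,d\nu\int H\,d\nu\ge\int F\,d\nu\int G\,d\nu$ with no need for the concavity of the profiles. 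Your worry that ``the naive Fubini bound is too lossy'' is the correct instinct --- the resolution is not a cleverer direct estimate but the correlation-type rearrangement that the four-function theorem encapsulates.

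The paper short-circuits your entire induction by applying the Karlin--Rinott theorem directly in $\RR^d$: with $Q$ the positive orthant, take $f_1=1_{K\cap Q}$, $f_2=1_{T\cap Q}$, $f_3=1_{(K+T)\cap Q}$, $f_4=1_{K\cap T\cap Q}$, and observe that for $x\in K\cap Q$, $y\in T\cap Q$ one has $x\land y\in K\cap T\cap Q$ (down-set property, which you also establish) and $x\lor y\preccurlyeq x+y$, hence $x\lor y\in(K+T)\cap Q$; the theorem applied to the product measure $\gamma_d$ restricted to $Q$ gives the result after multiplying by $2^d$. Note that this route does not even require your identity $\widetilde{K+T}=\tilde K+\tilde T$ --- the inclusion $\tilde K+\tilde T\subseteq\widetilde{K+T}$ suffices. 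So your skeleton is essentially a hand-rolled proof of the special case of the four-function theorem you need; citing that theorem (or proving its one-dimensional case, which is elementary) closes the gap.
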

Second, we show that an asymptotic version of Conjecture \ref{conj:strong-GCI} would imply the conjecture itself.
\begin{thm}\label{thm:asymptotic-GCI}
    Assume that there is a sequence of positive numbers $(c_N)$ with $\lim_{N\to \infty} c_N^{1/N} =1$, such that for any $N \in \N$ and $K,T$ origin-symmetric convex sets in $\RR^N$,
    \[\gamma_N(K+T)\gamma_N(K\cap T)\ge c_N \gamma_N(K)\gamma_N(T).\]
    Then, Conjecture \ref{conj:strong-GCI} holds for all $d>0$ and all $K,T \subseteq \RR^d$.
\end{thm}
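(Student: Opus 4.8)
The plan is to tensorize: given origin-symmetric convex $K, T \subseteq \RR^d$, apply the asymptotic hypothesis to the $N$-fold products $K^{\oplus m} := K \times \cdots \times K \subseteq (\RR^d)^m = \RR^{dm}$ and $T^{\oplus m}$, where $N = dm$. The key point is that the Gaussian measure factorizes over coordinates, $\gamma_{dm}(K^{\oplus m}) = \gamma_d(K)^m$, and similarly for $T$, $K \cap T$, and $K + T$, since $(K \cap T)^{\oplus m} = K^{\oplus m} \cap T^{\oplus m}$ and $(K+T)^{\oplus m} = K^{\oplus m} + T^{\oplus m}$. Both $K^{\oplus m}$ and $T^{\oplus m}$ are origin-symmetric convex sets in $\RR^{dm}$, so the hypothesis gives
\[
    \gamma_d(K+T)^m \gamma_d(K\cap T)^m \;=\; \gamma_{dm}(K^{\oplus m}+T^{\oplus m})\,\gamma_{dm}(K^{\oplus m}\cap T^{\oplus m}) \;\ge\; c_{dm}\,\gamma_d(K)^m\gamma_d(T)^m.
\]

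Taking $m$-th roots, this reads
\[
    \gamma_d(K+T)\,\gamma_d(K\cap T) \;\ge\; c_{dm}^{1/m}\,\gamma_d(K)\,\gamma_d(T).
\]
Now I let $m \to \infty$. Since $c_{dm}^{1/m} = \bigl(c_{dm}^{1/(dm)}\bigr)^{d}$ and, by hypothesis, $c_N^{1/N} \to 1$ as $N \to \infty$, the subsequence $c_{dm}^{1/(dm)} \to 1$ as well, so $c_{dm}^{1/m} \to 1^d = 1$. Hence $\gamma_d(K+T)\gamma_d(K\cap T) \ge \gamma_d(K)\gamma_d(T)$, which is exactly Conjecture \ref{conj:strong-GCI} in dimension $d$. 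Since $d$ was arbitrary, the conjecture holds in all dimensions.

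There is essentially no hard step here; the only points requiring a line of care are the two identities $(K \cap T)^{\oplus m} = K^{\oplus m} \cap T^{\oplus m}$ and especially $(K + T)^{\oplus m} = K^{\oplus m} + T^{\oplus m}$ (the latter because Minkowski sum of products is the product of Minkowski sums: $(x_1,\dots,x_m) + (y_1,\dots,y_m)$ with $x_i \in K$, $y_i \in T$ ranges exactly over $(K+T)^{\oplus m}$), together with the elementary observation that a subsequence of a convergent sequence converges to the same limit. One should also note that if $K$ or $T$ is unbounded (allowed, since the conjecture is stated for origin-symmetric convex \emph{sets}, e.g.\ slabs), the argument is unaffected, as the product construction and the factorization of $\gamma$ over coordinates go through verbatim for measurable convex sets of any Gaussian measure in $[0,1]$.
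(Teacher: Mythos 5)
Your proof is correct and follows essentially the same tensorization argument as the paper: form the $m$-fold products, use that $\gamma$ is a product measure and that products commute with Minkowski sums and intersections, take $m$-th roots, and let $m\to\infty$ using $c_{dm}^{1/m}=\bigl(c_{dm}^{1/(dm)}\bigr)^{d}\to 1$. No issues.
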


\subsection{Acknowledgements}
The authors would like to thank Ruth Heller and Yosef Rinott for their useful comments and discussion. The third named author would like to thank Matthieu Fradelizi for introducing her to Tehranchi's conjecture, and is thankful to the Azrieli foundation for the award of an Azrieli fellowship. The first author is supported by a grant from the Israel Science Foundation (ISF). The second and third named authors were supported in part by the European Research Council (ERC) under the European Union’s Horizon 2020  research and innovation programme (grant agreement No 770127).
\section{Background}
In this note, we switch between a probabilistic setting of $n$ random variables and a geometric setting of functions and sets in $\RR^d$. We will use some capital letters (e.g. $X,Y,X_i, Y_j$) to denote random vectors and some to denote convex sets (e.g. $K,L,T$). The inner product $\iprod{\cdot}{\cdot}$ will be the standard product in $\RR^d$. 
The Gaussian measure in $\RR^d$, denoted by $\gamma_d$, is the probability measure with density proportional to $e^{-\|x\|^2/2}$.
Finally, the Minkowski sum of two sets is the set 
$K+T := \{x+y: x\in K,\ y\in T\}$
and $K$ will be called an origin-symmetric (or centrally symmetric) convex set if $K=-K$. The convex hull of a set $S$ is the smallest convex set $K$ such that $S\subseteq K$, and is denoted by $\conv\{S\}$. The support function of a convex set $K$ is $h_K(x)=\sup_{y\in K}\iprod{x}{y}$. We denote $[n] = \{1,...,n\}$ for $n \in \N$. 

The study of Gaussian correlation inequalities started with the works of \v{S}id\'ak \cite{sidak1967rectangular,sidak1968multivariate} and Khatri \cite{khatri1967certain}, who independently proved the case of correlation between one random Gaussian and $n-1$ others (or, in the geometric formulation, between a convex body and a slab). It was conjectured by Das Gupta, Eaton, Olkin, Perlman, Savage, and Sobel in \cite{gupta1972inequalities}, and later proved by Royen in \cite{royen2014simple}, that for $K,T \subseteq \RR^d$ centrally symmetric,
\begin{equation}\label{eq:GCI}
    \gamma_d(K\cap T)\ge \gamma_d(K)\gamma_d(T).
\end{equation}
This inequality is known as the Gaussian Correlation Inequality.

The history of this problem is as follows. Pitt \cite{pitt1977gaussian} published a proof of inequality \eqref{eq:GCI} in two dimensions, and introduced the decoupling approach to the problem. However, at the time Pitt's results could not be translated to higher dimension, and so proofs of special cases were given: in \cite{Schechtman1998gaussian} Schechtman, Schlumprecht, and Zinn proved the inequality for two ellipsoids, for a body and a ball, and in some other cases. In \cite{harge1999particular}, Harg{\'e} proved the inequality for a body and an ellipsoid, and in \cite{cordero2002contraction}, Cordero-Erasquin used the Cafarelli contraction theorem to give another proof of this case.

Finally, Royen proved the Gaussian correlation conjecture in \cite{royen2014simple}, using Pitt's decoupling approach together with some fine analysis. He also proved that the only equality cases in \eqref{eq:GCI} are for `orthogonal' sets, i.e. for $K = K_1 \times \RR^{d_2}, T = \RR^{d_1} \times T_2$ in $\RR^{d} = \RR^{d_1 + d_2}$. Royen's proof is actually applicable to to a large family of distributions, for a simplified reading of his proof see \cite{latala2017royen}.

Since Royen's proof, there has been some work on expanding and refining the conjecture. Eskenazis, Nayar and Tkocz \cite{eskenazis2018gaussian} have extended the Gaussian correlation inequality to Gaussian mixtures, thus also proving a spherical correlation inequality (with respect to convex sets on the sphere).

The version of Conjecture \ref{conj:strong-GCI} presented in this note was proposed by Tehranchi in \cite{Tehranchi},
who also strengthened a result of Schechtman, Schlumprecht, and Zinn \cite{Schechtman1998gaussian}, and showed that 
\[ \gamma_d(K)\gamma_d(T) \le (1-s)^{-d/2} \gamma_d \left( \sqrt{\frac{2(1-s)}{1+t}} (K \cap T)\right) \gamma_d \left( \sqrt{\frac{1-s}{2(1-t)}} (K + T)\right)\]
for all $\sqrt{s}\le t< 1$.
Conjecture \ref{conj:strong-GCI} is also motivated by the fact the a similar inequality holds for the Lebesgue measure, namely for $K,T$ origin-symmetric,
\[\vol(K+T)\vol(K\cap T)\ge \vol(K)\vol(T),\]
as shown by Rogers and Shephard in \cite{rogers-shephard} (for more on this see \cite[Section 1.5]{AGA-i}).

A final motivation for this work is the quantitative version of the Gaussian Correlation Inequality. In \cite[Theorem 31]{shivam2020quantitative}, De, Nadimpalli and Servedio defined a function $c(K,T) \geq 0$ and determined that
$$\gamma_d(K\cap T)- \gamma_d(K)\gamma_d(T)\ge c(K,T) .$$
Conjecture \ref{conj:strong-GCI} is another example of a quantitative bound, as it can be rewritten as
\[
    \gamma_d(K \cap T) - \gamma_d(K) \gamma_d(T) \geq \left( \frac{1}{\gamma_d(K+T)} - 1 \right) \gamma_d(K) \gamma_d(T) .
\]

The probabilistic version of the strong Gaussian Correlation conjecture, Conjecture \ref{conj:strong-GCI-prob}, is new, as far as we know. However, it is equivalent to \ref{conj:strong-GCI}:
\begin{prop}\label{prop:equivalent-formulations}
    Conjecture \ref{conj:strong-GCI} holds if and only if Conjecture \ref{conj:strong-GCI-prob} holds.
\end{prop}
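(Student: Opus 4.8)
The plan is to set up the classical dictionary between zero-mean jointly Gaussian vectors and intersections of origin-symmetric slabs, translate the four probabilities in~\eqref{eq:strong-GCI-prob} into Gaussian measures of four explicit convex sets, and then observe that the two conjectures are restatements of one another. For vectors $v_1,\dots,v_m\in\RR^d$ and $c\in\bigl([0,\infty)\cup\{\infty\}\bigr)^m$ I will write $P(c):=\bigcap_{i=1}^m\{x\in\RR^d:|\iprod{x}{v_i}|\le c_i\}$, an origin-symmetric convex set, so that $\gamma_d(P(c))=\Pr(|\iprod{\xi}{v_i}|\le c_i\ \forall i)$ for $\xi\sim\gamma_d$.

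For the implication ``Conjecture~\ref{conj:strong-GCI} $\Rightarrow$ Conjecture~\ref{conj:strong-GCI-prob}'' I would factor the covariance matrix of $(X_1,\dots,X_n)$ as $VV^{\mathsf T}$ with rows $v_1,\dots,v_n\in\RR^d$, so that $(X_1,\dots,X_n)$ has the law of $(\iprod{\xi}{v_1},\dots,\iprod{\xi}{v_n})$, $\xi\sim\gamma_d$. Taking $K=P(s)$ and $T=P(t)$, one has $K\cap T=P(\min\{s,t\})$ straight from the definition, while the triangle inequality gives $K+T\subseteq P(s+t)$ and hence $\gamma_d(P(s+t))\ge\gamma_d(K+T)$. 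Then~\eqref{eq:strong-GCI} yields
\[
\gamma_d(P(s+t))\,\gamma_d(K\cap T)\ \ge\ \gamma_d(K+T)\,\gamma_d(K\cap T)\ \ge\ \gamma_d(K)\,\gamma_d(T),
\]
which, read back through the dictionary, is precisely~\eqref{eq:strong-GCI-prob}; degenerate cases where some $s_i$ or $t_i$ is $0$ with the corresponding $X_i$ nondegenerate make the right side of~\eqref{eq:strong-GCI-prob} vanish and are trivial.

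For the converse I would first reduce~\eqref{eq:strong-GCI} to the case of origin-symmetric polytopes $K,T$: for general origin-symmetric convex bodies pick symmetric polytopes $K_j\downarrow K$ and $T_j\downarrow T$, note $K_j\cap T_j\downarrow K\cap T$ and $K_j+T_j\downarrow K+T$, and let $j\to\infty$ (unbounded or lower-dimensional sets being handled by intersecting with large balls, or trivial). For polytopes $K,T$ I would take $v_1,\dots,v_m$ to be the outer facet normals of $K$, of $T$, \emph{and of $K+T$} together, and set $s_i=h_K(v_i)$, $t_i=h_T(v_i)$, so that $s_i+t_i=h_K(v_i)+h_T(v_i)=h_{K+T}(v_i)$. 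Using the facet descriptions of the symmetric polytopes $K$, $T$, $K+T$ (each is the intersection of the two-sided slabs cut out by its own facet normals) together with the fact that adjoining further valid slab inequalities changes nothing, I would check that
\[
P(s)=K,\qquad P(t)=T,\qquad P(\min\{s,t\})=K\cap T,\qquad P(s+t)=K+T .
\]
Applying Conjecture~\ref{conj:strong-GCI-prob} to $X_i=\iprod{\xi}{v_i}$, $\xi\sim\gamma_d$, with these thresholds then reads exactly $\gamma_d(K+T)\,\gamma_d(K\cap T)\ge\gamma_d(K)\,\gamma_d(T)$.

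The only step that is more than bookkeeping is this choice of directions in the converse: one must throw the facet normals of $K+T$ into the common list and not merely those of $K$ and $T$ --- in dimension $\geq 3$ the Minkowski sum of two polytopes can acquire facet normals belonging to neither summand --- since otherwise $P(s+t)$ is only a possibly strict superset of $K+T$, and one would recover just the weaker inequality with $\gamma_d(P(s+t))$ in place of $\gamma_d(K+T)$. The identity $h_{K+T}=h_K+h_T$ is exactly what makes the enlarged list work.
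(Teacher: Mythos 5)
Your proposal is correct and follows essentially the same route as the paper: factor the covariance to realize the $X_i$ as $\iprod{\xi}{u_i}$ and use $K+T\subseteq P(s+t)$ for one direction, and for the converse reduce to symmetric polytopes and take thresholds $s_i=h_K(u_i)$, $t_i=h_T(u_i)$ over a list of normals containing the facet normals of $K+T$ (the paper takes exactly the facet normals of $K+T$ and observes those of $K$, $T$, and $K\cap T$ are already among them, which is the same point you make about why the enlarged list is needed).
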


\begin{proof}
    Assume that Conjecture~\ref{conj:strong-GCI} holds, and denote by $X=(X_1,\dots, X_n)$ the centered Gaussian vector whose indices are the Gaussian variables of Conjecture \ref{conj:strong-GCI-prob}. The Gaussian vector $X$ can be expressed as a linear image of a standard Gaussian $Y\in \RR^d$ for some $d \in [n]$, i.e. there exist $u_1,\dots, u_n \in \RR^d$ such that $X_i=\iprod{Y}{u_i}$. Construct the following two convex bodies in $\RR^d$,
    \begin{equation} \label{eq:geometric-to-prob}
        K=\{y:  |\iprod{y}{u_i}| \le s_i \ \forall i \in [n]\},\ \ T=\{y:  |\iprod{y}{u_i}| \le t_i \ \forall i \in [n]\}.  
    \end{equation}
    Note that $K+T \subseteq \{y: |\iprod{y}{u_i}| \le s_i+t_i \ \forall i \in [n]\}$, so applying \eqref{eq:strong-GCI}, we get that
    \begin{align*}
         &\Pr \parens*{|X_i|\le s_i+t_i \ \forall i \in [n]} \cdot \Pr \parens*{|X_i|\le \min\{s_i,t_i\} \ \forall i \in [n]} \\
         & \quad \ge \gamma_d(K+T)\gamma_d(K\cap T) \ge \gamma_d(K)\gamma_d(T)\\
         & \quad = \Pr \parens*{|X_i|\le s_i \ \forall i \in [n]} \cdot \Pr \parens*{|X_i|\le t_i \ \forall i \in [n]} .
    \end{align*}
    In the other direction, assume Conjecture~\ref{conj:strong-GCI-prob} holds. Let $K$ and $T$ be origin-symmetric convex bodies in $\RR^d$ for some $d \in [n]$. Assume that $K,T$ are polytopes; the inequality for general origin-symmetric convex bodies follows by a standard approximation argument.

    Let $\{u_i\}_{i=1}^n$ denote the set of outer unit normals of facets of $K+T$. Note that any outer unit normal of a facet of $K$, $T$, or $K \cap T$ is already in this set. Taking $Y$ a $d$-dimensional standard Gaussian, we construct $X_1,\dots, X_n$ jointly Gaussian random variables by taking $X_i=\iprod{Y}{u_i}$. Then applying \eqref{eq:strong-GCI-prob} with $\{X_i\}_{i=1}^n$, $s_i = h_K(u_i)$, and $t_i = h_T(u_i)$, we recall that $h_{K+T}=h_K+h_T$ and $h_{K\cap T} = \min\{h_K, h_T\}$ to get the following:
    \begin{align*}
        &\gamma_d(K+T) \cdot \gamma_d(K \cap T) \\
        & \quad = \Pr \parens*{|X_i|\le s_i+t_i \ \forall i \in [n]} \cdot \Pr \parens*{|X_i|\le \min\{s_i,t_i\} \ \forall i \in [n]}  \\
        & \quad \ge \Pr \parens*{|X_i|\le s_i \ \forall i \in [n]} \cdot \Pr \parens*{|X_i|\le t_i \ \forall i \in [n]}  = \gamma_d(K)\gamma_d(T) .
    \end{align*}
\end{proof}

\begin{rem}\normalfont
    This proof is quite standard, and a similar variant of it has been used to show the equivalence of Equations~\eqref{eq:prob-GCI} and~\eqref{eq:GCI}.
\end{rem}

\section{Proofs}
\subsection{The refined \v{S}id\'ak-Khatri inquality}
To prove Theorem \ref{thm:refined-sidak-prob} we will use a lemma similar to Giannopoulos's proof \cite{giannopoulos1997vector-ballancing} of the \v{S}id\'ak-Khatri inquality:

    \begin{lem}\label{lem:lift}
        Let $d_1, d_2 \in \N$ and $d = d_1 + d_2$. Let $K$ be an origin-symmetric convex body in $\RR^d = \RR^{d_1} \times \RR^{d_2}$ and let $T = T_1 \times \RR^{d_2}$ be a product body, where $T_1$ is a convex body in $\RR^{d_1}$. Denote by $\pi_i: \RR^d \to \RR^{d_i}$, $i=1,2$, the orthogonal projections.
        
        For any $s \in \RR^{d_1}$ define the function $f(s) = \gamma_{d_2}(K \cap \pi_1^{-1}(s))$, and assume that for any $z > 0$, $\setdef{s \in \pi_1(K)}{f(s) \geq z}$ and $T_1$ satisfy \eqref{eq:geometric-sidak} in $\RR^{d_1}$.
        
        Then $K$ and $T$ also satisfy $\eqref{eq:geometric-sidak}$ in $\RR^d$.
    \end{lem}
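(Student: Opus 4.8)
The plan is to establish \eqref{eq:geometric-sidak} for $K,T$ by slicing along the fibers of $\pi_1$ and integrating the hypothesis over the level sets $L_z:=\setdef{s\in\pi_1(K)}{f(s)\ge z}$, $z>0$. (By Pr\'ekopa's theorem $f$ is log-concave, and it is even since $K=-K$, so each $L_z$ is an origin-symmetric convex body contained in $\pi_1(K)$; this is why the hypothesis on the pair $(L_z,T_1)$ is natural, but the argument below will only use $L_z\subseteq\pi_1(K)$ and measurability of $f$.) By Fubini and the layer-cake formula, $\gamma_d(K)=\int_{\RR^{d_1}}f\,d\gamma_{d_1}=\int_0^\infty\gamma_{d_1}(L_z)\,dz$. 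Since $T=T_1\times\RR^{d_2}$ gives $(K\cap T)\cap\pi_1^{-1}(s)=K\cap\pi_1^{-1}(s)$ for $s\in T_1$ and $\varnothing$ otherwise, we get $\gamma_d(K\cap T)=\int_{T_1}f\,d\gamma_{d_1}=\int_0^\infty\gamma_{d_1}(L_z\cap T_1)\,dz$; and $\gamma_d(T)=\gamma_{d_1}(T_1)$.

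The heart of the argument, and the step I expect to be the main obstacle, is to evaluate $\gamma_d(\conv\{K\cup T\})$; I claim it equals $\gamma_{d_1}\bigl(\conv\{\pi_1(K)\cup T_1\}\bigr)$. The bound ``$\le$'' is immediate, since both $K$ and $T$ lie in the cylinder $\conv\{\pi_1(K)\cup T_1\}\times\RR^{d_2}$, so $\conv\{K\cup T\}$ does too. The bound ``$\ge$'' is what requires work: I would show that $\conv\{K\cup T\}\cap\pi_1^{-1}(s)=\RR^{d_2}$ for $\gamma_{d_1}$-almost every $s\in\conv\{\pi_1(K)\cup T_1\}$. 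Concretely, if $s=(1-\mu)p+\mu t$ with $p\in\pi_1(K)$, $t\in T_1$ and $\mu\in(0,1]$, then picking $q$ with $(p,q)\in K$ one checks, for every $y\in\RR^{d_2}$,
\[(s,y)=(1-\mu)(p,q)+\mu\bigl(t,\tfrac{y-(1-\mu)q}{\mu}\bigr)\in\conv\{K\cup T\};\]
moreover every $s$ in the interior of $\pi_1(K)$ admits such a representation with arbitrarily small $\mu>0$ (take $p=\tfrac{s-\mu t}{1-\mu}$, which lies in $\pi_1(K)$ for small $\mu$), so the set of exceptional $s\in\conv\{\pi_1(K)\cup T_1\}$ is contained in $\partial(\pi_1(K))$, which is $\gamma_{d_1}$-null. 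Since $\pi_1(\conv\{K\cup T\})=\conv\{\pi_1(K)\cup T_1\}$, Fubini then yields the claimed identity. The only delicate point is this null-set bookkeeping, which is routine once the displayed representation is in hand.

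It remains to assemble the pieces. For each $z>0$ the hypothesis gives $\gamma_{d_1}(\conv\{L_z\cup T_1\})\,\gamma_{d_1}(L_z\cap T_1)\ge\gamma_{d_1}(L_z)\,\gamma_{d_1}(T_1)$, and since $L_z\subseteq\pi_1(K)$ we have $\gamma_{d_1}\bigl(\conv\{\pi_1(K)\cup T_1\}\bigr)\ge\gamma_{d_1}(\conv\{L_z\cup T_1\})$, hence
\[\gamma_{d_1}\bigl(\conv\{\pi_1(K)\cup T_1\}\bigr)\,\gamma_{d_1}(L_z\cap T_1)\ \ge\ \gamma_{d_1}(L_z)\,\gamma_{d_1}(T_1)\qquad\text{for all }z>0.\]
Integrating this inequality over $z\in(0,\infty)$ and substituting the identities for $\gamma_d(K)$, $\gamma_d(K\cap T)$, $\gamma_d(T)$, and $\gamma_d(\conv\{K\cup T\})$ established above gives $\gamma_d(\conv\{K\cup T\})\,\gamma_d(K\cap T)\ge\gamma_d(K)\,\gamma_d(T)$, which is \eqref{eq:geometric-sidak} for $K$ and $T$.
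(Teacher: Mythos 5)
Your proof is correct and follows essentially the same route as the paper's: slice along $\pi_1$, use the layer-cake formula to reduce to the level sets $L_z$, apply the hypothesis for each $z$ together with the monotonicity $\gamma_{d_1}(\conv\{\pi_1(K)\cup T_1\})\ge\gamma_{d_1}(\conv\{L_z\cup T_1\})$, and integrate. The only difference is that you justify in detail the identity $\gamma_d(\conv\{K\cup T\})=\gamma_{d_1}(\conv\{\pi_1(K)\cup T_1\})$ (up to a null set), which the paper asserts in one line as $\conv\{K\cup T\}=\pi_1(\conv\{K\cup T\})\times\RR^{d_2}$; your extra care here is harmless and arguably more precise.
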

    We leave the proof of this lemma for later, and show that it immediately gives a proof of Theorem \ref{thm:slab}:
    \begin{proof}[Proof of Theorem \ref{thm:slab}]
        Assume without loss of generality that $u\in \sp\{e_1\} \subset \RR^d$, where $e_1$ is the first standard basis vector. In the notation of Lemma \ref{lem:lift}, take $d_1=1, d_2=d-1$, then $T_1$ is a symmetric interval $[-a, a]$ (with $a=1/\|u\|$) and for any $z > 0$, $\setdef{s \in \pi_1(K)}{f(s) \geq z}$ is also some symmetric interval, $[-b(z),b(z)]$, by log-concavity of $f$. These intervals clearly satisfy \eqref{eq:geometric-sidak}, since for $d=1$, Theorem \ref{thm:slab} holds trivially.
    \end{proof}
    
    Using Theorem \ref{thm:slab} we prove the refined \v{S}id\'ak-Khatri inequality.
    
    \begin{proof}[Proof of Theorem \ref{thm:refined-sidak-prob}]
    Denote $X=(X_1,\dots, X_n)$. The Gaussian vector $X$ can be expressed as a linear image of a standard Gaussian $Y\in \RR^d$ for some $d \in [n]$, i.e. there exist $u_1,\dots, u_n \in \RR^d$ such that $X_i=\iprod{Y}{u_i}$.
    Taking $s_1 = 1+a_1, s_2 =1,\dots, s_n=1$ and $t_1=1, t_2=\infty,\dots, t_n = \infty$, we construct the two bodies
    \[
        K=\{y\in \RR^d:  |\iprod{y}{u_i}| \le s_i \ \forall i \in [n]\},
    \ \ T=\{y \in \RR^d:  |\iprod{y}{u_i}| \le t_i \ \forall i \in [n]\},
    \]
    and note that $T$ is a symmetric slab in direction $u_1$. Applying Theorem \ref{thm:slab}, and using the fact that $\conv(K \cup T) \subseteq \{y: |\iprod{y}{u_1}|\le 1+a_1\}$, we find that
    \begin{align*}
        &\Pr(|X_1|\le 1+a_1) \cdot \Pr(|X_i|\le 1  \ \forall i \in [n])\\
        & \quad = \gamma_d(\conv(K \cup T)) \cdot \gamma_d(K \cap T)\ge \gamma_d(K) \cdot \gamma_d(T)\\
        & \quad = \Pr(|X_1|\le 1) \cdot \Pr(|X_1|\le 1+a_1,\dots, |X_n|\le 1  ).
    \end{align*}
    This proves \eqref{eq:simple-sidak}, and since the choice of the first coordinate $X_1$ was arbitrary, we may apply the inequality inductively on other coordinates and find that for all $a_1,\dots, a_n>0$,
    \[
        \dfrac{\Pr(|X_i|\le 1 \ \forall i \in [n])}{ \prod_{i=1}^n \Pr(|X_i|\le 1) } \ge  \dfrac{\Pr(|X_i|\le 1+a_i \ \forall i \in [n])}{ \prod_{i=1}^n \Pr(|X_i|\le 1+a_i) } .
    \]
    Since the inequality holds for all $\{a_i\}$, we prove \eqref{eq:refined-sidak}.
\end{proof}

\begin{rem}\label{rem:confidence-interval}\normalfont
    Our refined \v{S}id\'ak inequality may be reinterpreted for $Y_1,\dots, Y_n \sim N(0,1)$ standard Gaussians (Taking $Y_i = \frac{X_i}{\sigma_i}$) as 
        \begin{equation*}
            \dfrac{\Pr(|Y_1|\le c_1,\dots, |Y_n|\le c_n  )}{ \prod_{i=1}^n \Pr(|Y_i|\le c_i) } \ge \sup_{a_1,\dots,a_n\in (0,\infty) } \dfrac{\Pr(|Y_1|\le c_1+a_1,\dots, |Y_n|\le c_n+a_n  )}{ \prod_{i=1}^n \Pr(|Y_i|\le c_i+a_i) }, 
    \end{equation*}
    for any choice of $c_i > 0$ for all $i$.
    In \v{S}id\'ak's original paper, he uses the same inequality with 1 on the left hand side to obtain a confidence rectangle for $Y_1,\dots, Y_n$. In particular, for confidence level of $(1-\alpha)$ one takes $c_i=c$ such that $\Phi(c)=\frac{1}{2}(1+(1-\alpha)^{1/k} )$.

    In the refined inequality, if one may find any $a>0$ such that 
    $$ A = \frac{\Pr(|Y_i|\le c+a \ \forall i \in [n])} { \prod_{i=1}^n \Pr(|Y_i|\le c+a)} >1$$
    then the same choice of $c_i=c$ would improve the confidence level to $A(1-\alpha)$.
\end{rem}

Finally, let us return to the proof of the lemma from the beginning of this subsection.
    \begin{proof}[Proof of Lemma \ref{lem:lift}]
        Note that $\conv\{K \cup T\} = \pi_1(\conv\{K \cup T\}) \times \RR^{d_2}$. Then by Fubini's theorem,
        \begin{align*}
            \gamma_d(\conv\{ K\cup T\} ) \gamma_d( K \cap T )  &= \gamma_{d_1}(\pi_1(\conv\{ K \cup T\})) \int_{\RR^{d_1}} f(s) \mathbbm{1}_{T_1}(s) \diff \gamma_{d_1}(s)  \\
            &= \gamma_{d_1}(\conv\{ \pi_1(K) \cup T_1\})  \int_0^\infty \gamma_{d_1}(\setdef{s \in T_1}{f(s) \geq z}) \diff z \\
            &= \int_0^\infty \gamma_{d_1}(\{f \geq z\} \cap T_1) \cdot \gamma_{d_1}(\conv\{ \pi_1(K) \cup T_1\}) \diff z \\
            &\geq \int_0^\infty \gamma_{d_1}(\{f \geq z\} \cap T_1) \cdot \gamma_{d_1}(\conv\{ \{f \geq z\} \cup T_1\}) \diff z ,
        \end{align*}
        where the last inequality holds since for any $z > 0$, $\{f \geq z\} \subseteq \pi_1(K)$ by definition. By the assumption, for any $z > 0$,
        \begin{gather*}
            \gamma_{d_1}(\{f \geq z\} \cap T_1) \cdot \gamma_{d_1}(\conv\{ \{f \geq z\} \cup T_1\} ) \geq \gamma_{d_1}(\{f \geq z\}) \cdot \gamma_{d_1}(T_1) .
        \end{gather*}
        Therefore,
        \begin{align*}
            \gamma_d(\conv\{ K\cup T\} ) \gamma_d( K \cap T )  \geq \int_0^\infty \gamma_{d_1}(\{f \geq z\}) \diff z \cdot \gamma_{d_1}(T_1) = \gamma_d(K) \cdot \gamma_{d_1}(T_1) .
        \end{align*}
        Noting that $\gamma_d(T) = \gamma_{d_1}(T_1)$, the proof is completed.
    \end{proof}

    \begin{rems}\normalfont
    \begin{itemize}
        \item The only property of $\gamma_d$ used in the proof is that it is a product  probability measure with respect to the splitting $\RR^d = \RR^{d_1} \times \RR^{d_2}$.
        \item If the assumption that $\setdef{s \in \pi_1(K)}{f(s) \geq z}$ and $T_1$ satisfy \eqref{eq:geometric-sidak} in $\RR^{d_1}$ is replaced with the weaker assumption that the sets satisfy \eqref{eq:strong-GCI}, then we can conclude, with the same proof, that $K$ and $T$ satisfy \eqref{eq:strong-GCI} in $\RR^d$.
    \end{itemize}
        
    \end{rems}

\subsection{Stronger Gaussian correlation inequality}
We begin with a simple proof of Theorem \ref{thm:uncond-strong-GCI}, which shows that \eqref{eq:strong-GCI} holds for unconditional convex sets. Recall that a convex set is called unconditional if it is symmetric with respect to all coordinate hyperplanes. Following \cite{Schechtman1998gaussian}, we use the Karlin-Rinott Theorem. Denote by $\preccurlyeq$ the product partial order on $\RR^n$, i.e. $x \preccurlyeq y$ if and only if $\forall i: x_i \leq y_i$. This is a lattice; denote by $\lor, \land$ join and meet operations (i.e. coordinate-wise maximum and minimum).
\begin{thm}[{\cite[Theorem 2.1]{karlin1980classes}}]\label{thm:KR}
    Let $\mu$ be a product measure on $\RR^d$ and let $f_i$, $1 \le i \le 4$,
be nonnegative functions on $\RR^d$ satisfying
$$f_1(x)f_2(y) \le f_3(x \lor y)f_4(x \land y).$$
Then,
\[\int f_1 d\mu \int f_2 d\mu \le \int f_3 d\mu \int f_4 d\mu.\]
\end{thm}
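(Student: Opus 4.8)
The plan is to prove Theorem~\ref{thm:KR} by induction on the dimension $d$, using the product structure to peel off one coordinate at a time and thereby reduce everything to the one-dimensional case; the one-dimensional case is then reduced, by approximation, to an inequality at two points, which is the real content. Throughout I would write $\mu=\mu_1\otimes\cdots\otimes\mu_d$ and use that the lattice operations on $\RR^d$ are coordinatewise, so that under any splitting $\RR^d=\RR^{d-1}\times\RR$ one has $(x,s)\vee(y,t)=(x\vee y,\,s\vee t)$ and likewise for $\wedge$. For the inductive step I would fix $x,y\in\RR^{d-1}$ and apply the one-dimensional theorem in the last variable: the four functions $s\mapsto f_1(x,s)$, $t\mapsto f_2(y,t)$, $u\mapsto f_3(x\vee y,u)$, $v\mapsto f_4(x\wedge y,v)$ satisfy, for all $s,t\in\RR$,
\[ f_1(x,s)\,f_2(y,t)\le f_3(x\vee y,\,s\vee t)\,f_4(x\wedge y,\,s\wedge t), \]
so, setting $g_i(x)=\int_\RR f_i(x,t)\,d\mu_d(t)$, the $d=1$ case gives $g_1(x)g_2(y)\le g_3(x\vee y)\,g_4(x\wedge y)$ for all $x,y\in\RR^{d-1}$. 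Applying the $(d-1)$-dimensional case to $g_1,\dots,g_4$ and using Fubini to identify $\int g_i\,d(\mu_1\otimes\cdots\otimes\mu_{d-1})=\int f_i\,d\mu$ then closes the induction. This part is routine; all the difficulty sits in the base case $d=1$.

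For $d=1$ I would first reduce to simple functions. Approximating $f_1,f_2$ from below and $f_3,f_4$ from above by simple functions preserves the hypothesis, since $\phi_1\le f_1$, $\phi_2\le f_2$, $\psi_3\ge f_3$, $\psi_4\ge f_4$ imply $\phi_1(x)\phi_2(y)\le\psi_3(x\vee y)\psi_4(x\wedge y)$; passing to the monotone limits of the integrated inequality recovers the general case. For simple functions, passing to the common refinement of their level sets yields a finite partition of $\RR$ on which all four functions are constant, so $\int f_i\,d\mu=\sum_P f_i|_P\,\mu(P)$ depends only on the masses $\mu(P)$. Replacing $\mu$ by the finitely supported measure placing mass $\mu(P)$ at a representative $\xi_P\in P$ leaves every integral unchanged, and the hypothesis holds at the points $\{\xi_P\}$ because it holds everywhere. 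Thus we may assume $\mu$ is supported on a finite chain $x_1<\cdots<x_n$; absorbing the masses into the functions, the claim becomes $\big(\sum_k f_1(x_k)\big)\big(\sum_k f_2(x_k)\big)\le\big(\sum_k f_3(x_k)\big)\big(\sum_k f_4(x_k)\big)$. I would prove this by induction on $n$, merging the top two atoms into a single point carrying the values $\tilde f_i=f_i(x_{n-1})+f_i(x_n)$: the diagonal hypothesis at the merged point is exactly the two-point case applied to $\{x_{n-1},x_n\}$, while each cross hypothesis against a lower atom $x_k$ follows by adding the hypotheses at $(x_k,x_{n-1})$ and $(x_k,x_n)$. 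Hence the merged data satisfy the hypotheses on a chain of length $n-1$, and the induction hypothesis applies.

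Everything therefore rests on the two-point base case, which I expect to be the main obstacle: given nonnegative reals $a_i=f_i(x_1)$, $b_i=f_i(x_2)$ with $a_1a_2\le a_3a_4$, $b_1b_2\le b_3b_4$, and $a_1b_2,\,b_1a_2\le b_3a_4$, one must show $(a_1+b_1)(a_2+b_2)\le(a_3+b_3)(a_4+b_4)$. A termwise comparison fails, so the point is to handle the two cross terms \emph{together}. Writing $p=a_1b_2$, $q=b_1a_2$, $M=b_3a_4$, $N=a_3b_4$, the hypotheses give $0\le p,q\le M$ and $pq=(a_1a_2)(b_1b_2)\le(a_3a_4)(b_3b_4)=MN$, from which $p+q\le M+N$ follows (the case $M=0$ being trivial, and otherwise via the factorisation $(p-M)(p-N)\le0$ on the relevant range, together with the symmetric bound in $q$). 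Expanding both products and combining this with $a_1a_2\le a_3a_4$ and $b_1b_2\le b_3b_4$ yields the two-point inequality exactly, and hence the theorem.

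The one genuinely delicate point to get right in the write-up is the approximation in the first reduction of the $d=1$ case: one must verify that the monotone one-sided approximation of the $f_i$ preserves the multiplicative hypothesis and that the integrals converge (after a harmless truncation to ensure finiteness). Since the hypothesis constrains the functions rather than the measure, it is automatically stable under these manipulations, and for the bounded functions arising in the applications the passage to the limit is immediate; I would relegate this verification to a short lemma so that the conceptual line—tensorisation, reduction to a finite chain, and the two-point estimate—stands out clearly.
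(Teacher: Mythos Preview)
The paper does not give its own proof of Theorem~\ref{thm:KR}: it is quoted from \cite{karlin1980classes} and only remarked to be a continuous version of the Ahlswede--Daykin four-function theorem, then used as a black box in the proof of Theorem~\ref{thm:uncond-strong-GCI}. So there is nothing to compare against; the relevant question is simply whether your outline is correct.

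It is, and it is exactly the classical Ahlswede--Daykin argument. The tensorisation step is clean: with $g_i(x)=\int f_i(x,t)\,d\mu_d(t)$, the one-dimensional case applied at fixed $(x,y)$ gives $g_1(x)g_2(y)\le g_3(x\vee y)g_4(x\wedge y)$, and the inductive hypothesis in dimension $d-1$ finishes. The reduction of the $d=1$ case to a finite chain is fine because in one dimension $x\vee y,\,x\wedge y\in\{x,y\}$, so restricting to any finite set of representatives preserves the hypothesis. The merging induction is correct: the diagonal check at the merged atom is precisely the two-point inequality, and the cross checks follow by adding the two relevant hypotheses. Your two-point computation is also right: with $p,q\le M$ and $pq\le MN$, either $p\le N$ (then $p+q\le N+M$ using $q\le M$) or $N\le p\le M$ (then $(p-M)(p-N)\le0$ gives $p+MN/p\le M+N$, and $q\le MN/p$); either way $p+q\le M+N$.

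The only point that deserves real care in a full write-up is the one you flag yourself: the passage from measurable to simple functions. Approximating $f_1,f_2$ from below is harmless, but approximating $f_3,f_4$ from above by simple functions is not automatic when they are unbounded, and naive truncation $f_i\mapsto f_i\wedge M$ can destroy the hypothesis (e.g.\ $f_1=f_2=1$, $f_3=100$, $f_4=1/100$ at some pair of points, truncated at $M=10$). One clean route is to discretise the \emph{measure} rather than the functions: partition $\RR$ into finitely many intervals, replace $\mu$ by the measure putting mass $\mu(I)$ at a chosen point of each interval $I$, prove the discrete inequality there, and pass to the limit as the partition refines; weak convergence of these discrete measures to $\mu$ then recovers the integrals (after the usual reduction to the case where all four integrals are finite). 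Either way this is a technicality, and your identification of it as the only delicate step is accurate.
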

We mention that this theorem is a continuous version of the celebrated four-function theorem \cite{four-function}.

\begin{proof}[Proof of Theorem \ref{thm:uncond-strong-GCI}]
    Let $Q = \{x \in \RR^d \ | \ 0 \preccurlyeq x\}$ be the positive orthant. Note that by uncoditionality of $K$, we have that if $0 \preccurlyeq x \preccurlyeq y \in K$, then also $x \in K$, and similarly for $T$. 
    
    By unconditionality of $K$ and $T$, if $x\in K \cap Q$ and $y\in T \cap Q$, then since $x \land y \preccurlyeq x, y$ we have $x \land y \in K \cap T \cap Q$. By the positivity of elements in $Q$ we have $x \lor y \preccurlyeq x+y$ which implies that $x \lor y \in (K+T) \cap Q$. 
    It follows that $\gamma_d$ and the functions
    \[f_1 = 1_{K\cap Q},\ f_2 = 1_{ T\cap Q},\ f_3=1_{(K+T)\cap Q},\ f_4=1_{K\cap T\cap Q} \]
    satisfy the  assumptions of Theorem \ref{thm:KR}, and we find that
    \[\gamma_d(K\cap Q)\gamma_d(T\cap Q)\le \gamma_d(K\cap T\cap Q)\gamma_d((K+T)\cap Q). \]
    Since $\gamma_d(K)=2^n \gamma_d(K\cap Q)$, and similarly for $T, K\cap T$ and $K+T$, the theorem follows.
\end{proof}

\begin{rem}\normalfont
    Theorem \ref{thm:uncond-strong-GCI} holds for any symmetric product measure, and the proof is verbatim. 
\end{rem}

Next, we see that unlike in Theorem \ref{thm:slab}, in the unconditional case the Minkowski sum in Conjecture \ref{conj:strong-GCI} cannot be replaced with a smaller set such as the convex hull of $K,T$.
\begin{exm}\label{ex:not-for-conv}
Let $K,T\subseteq \RR^2$ and $N\in \RR_+$, with $K=[-1/N,1/N]\times [-N,N]$, and $T=[-N,N]\times[-1/N,1/N]$. Denoting $L = \conv\{\pm e_1, \pm e_2\}$ where $\{e_1,e_2\}$ is the standard orthonormal basis we have
\begin{align*}
 \gamma_2(K)\gamma_2(T)&= \gamma_1([-1/N,1/N])^2\gamma_1([-N,N])^2,
\end{align*}
while
\begin{align*}
    \gamma_2(K\cap T) \gamma_2(\conv (K\cup T)) &= \gamma_1([-1/N,1/N])^2 \gamma_2(\conv (K\cup T)) \\
 & \le \gamma_1([-1/N,1/N])^2 \gamma_2((N+1/N)L ) \\
&= \gamma_1([-1/N,1/N])^2 \gamma_1 \left( \left[-\frac{(N+ 1/N)}{\sqrt{2}},\frac{(N+ 1/N)}{\sqrt{2}}\right]\right)^2
\end{align*}
where the inequality is due to the inclusion $K,T\subseteq (N+1/N)L$, and the volume of the square $(N+1/N)L$ can be computed by the rotation invariance of $\gamma_2$. The inequality 
\[\gamma_2(K)\gamma_2(T)\le \gamma_2(K\cup T) \gamma_2(\conv (K\cup T)) \]
would hence imply that 
\[\gamma_1([-N,N])\le \gamma_1 \left( \left[-\frac{(N+ 1/N)}{\sqrt{2}},\frac{(N+ 1/N)}{\sqrt{2}}\right]\right)\]
which is false for $N \geq 3$.

\end{exm}

We end this subsection with the proof of Theorem \ref{thm:asymptotic-GCI}, which is a standard tensorization argument.

\begin{proof}[Proof of Theorem \ref{thm:asymptotic-GCI}]
    Let $K,T \subseteq \RR^d$ be origin-symmetric sets.
    Denote by $K^N = \prod_{i=1}^N K \subseteq (\RR^d)^N = \RR^{Nd}$, and similarly $T^N = \prod_{i=1}^N T \subseteq (\RR^d)^N = \RR^{Nd}$. Then, by the assumption, for all $N\in \mathbb{N}$,
    \[\gamma_{Nd}(K^N+T^N)\gamma_{Nd}(K^N\cap T^N)\ge c_{Nd} \cdot \gamma_{Nd}(K^N)\gamma_{Nd}(T^N).\]
   Note that $K^N + T^N = (K+T)^N$, and $K^N \cap T^N = (K \cap T)^N$. Since $\gamma_{Nd}$ is a product measure, $\gamma_{Nd}(K^N)= (\gamma_d(K))^N$, $\gamma_{Nd}(T^N)= (\gamma_d(T))^N$, $\gamma_{Nd}(K^N+T^N) = (\gamma_d(K+T))^N$ and  $\gamma_{Nd}(K^N\cap T^N) = (\gamma_d(K \cap T))^N$ . Plugging these into the inequality above and taking $N$-th root gives 
   \[\gamma_d (K+T) \gamma_d (K\cap T) \ge c_{Nd}^{1/N} \gamma_d(K)\gamma_d(T).\]
   As $d$ is constant we may apply our assumption on $c_N$  and take $N \to \infty$ in the inequality above to conclude the proof.
\end{proof}

\bibliographystyle{amsplain}
\addcontentsline{toc}{section}{\refname}\bibliography{GCI}
\end{document}